\title{Elementary formulas for integer partitions}
 \theoremstyle{plain}
 \newtheorem{theorem}{Theorem}
 \theoremstyle{definition}
 \newtheorem{definition}{Definition}
 \theoremstyle{remark}
 \numberwithin{equation}{section}
\begin{document}

 \author{Mohamed El Bachraoui}
 \address{Mohamed El Bachraoui, Dept. Math. Sci,
 United Arab Emirates University, PO Box 17551, Al-Ain, UAE}
 \email{melbachraoui@uaeu.ac.ae}
 \keywords{Integer partitions, relatively prime partitions}
 \subjclass{11A25, 05A17, 11P83}

  \begin{abstract}
  In this note we will give various exact formulas for functions on integer partitions including the functions
  $p(n)$ and $p(n,k)$ of the number of partitions of $n$ and the number of such partitions into exactly $k$ parts respectively.
   For instance, we shall prove that
   \[
  \begin{split}
 p(n) &=
 \sum_{d|n}
 \sum_{k=1}^{d}
 \sum_{i_0 =1}^{\lfloor d/k \rfloor}
 \sum_{i_1 =i_0}^{\lfloor\frac{d- i_0}{k-1} \rfloor}
 \sum_{i_2 =i_1}^{\lfloor\frac{d- i_0 - i_1}{k-2} \rfloor} \ldots
 \sum_{i_{k-3}=i_{k-4}}^{\lfloor\frac{n- i_0 - i_1-i_2- \ldots-i_{k-4}}{3} \rfloor} \\
 & \sum_{c|(d,i_0,i_1,i_2,\ldots,i_{k-3})}
 \mu(c) \left(\left\lfloor \frac{d-i_0-i_1-i_2- \ldots i_{k-3}}{2c} \right\rfloor -
 \left\lfloor\frac{i_{k-3}-1}{c} \right\rfloor \right).
 \end{split}
 \]
  Our proofs are elementary.
 \end{abstract}
 \date{\textit{\today}}
  \maketitle
  \section{Introduction}
  Among challenges that faced mathematicians who interested in integer partitions
  was the problem to find a formula
   to compute the number of partitions of any positive integer.
   Hardy and Ramanujan in \cite{Hardy-Ramanujan} gave the following asymptotic formula for $p(n)$,
   \[
   p(n) \sim \frac{e^{\pi\sqrt{2n/3}}}{4n \sqrt{3}},
   \]
   and Rademacher in \cite{Rademacher}
   gave the following exact formula for $p(n)$,
  \begin{equation} \label{exact1}
  p(n)= \frac{1}{\pi \sqrt{2}}\sum_{k\geq 1}A_k(n) \sqrt{k}\left[ \frac{d}{dx}
    \frac{\sinh \left( \frac{\pi}{k}\sqrt{\frac{2}{3}(x- \frac{1}{24})} \right)}{\sqrt{x-\frac{1}{24}}}
    \right]_{x=n},
  \end{equation}
  where
  \[
   A_k(n)=\sum_{\substack{h \bmod k \\ (h,k)=1}}\omega_{h,k} e^{-2\pi i n h/k}
  \]
  and $\omega_{h,k}$ is a certain $24$th root of unity. To find such deep formulas the authors used tools from complex analysis.
  A standard reference for more details about integer partitions is \cite{Andrews1}.
  Our purpose in this work is
  to give exacts formulas involving only finite sums for functions on integer partitions.

  A nonempty finite set $A$ of positive integers is \emph{relatively prime} if $\gcd(A)=1$ and it is \emph{relatively prime to $m$}
    if $\gcd(A,m)=1$. Accordingly, a partition of $n$ is called relatively prime if its parts form a relatively prime set and it is called relatively prime to $m$ if its parts form a set which is relatively prime to $m$. Throughout let $k,l,m,n$ be positive integers, let
   $\lfloor x \rfloor$ be the floor of $x$, and let $\mu(n)$ be the M\"obius mu function.
  \begin{definition}
  Let
  $p(n)$ be  the number of (unrestricted) partitions of $n$, let
  $p_{\Psi(m)}(n)$ be the number of partitions of $n$ which are relatively prime to $m$, and
  let $p_{\Psi}(n)$ be the number of relatively prime partitions of $n$. Let
  $p(n,k)$ be the number of partitions of $n$ into exactly $k$ parts, let
  $p_{\Psi(m)}(n,k)$ be the number of partitions of $n$ into exactly $k$ parts which are relatively prime to $m$, and
  let $p_{\Psi}(n,k)$ be the number of relatively prime partitions of $n$ into exactly $k$ parts.
  Let $p(n,k,l)$ be the number of partitions of $n$ into exactly $k$ parts the smallest of which is $l$, let
  $p_{\Psi(m)}(n,k,l)$ be the number of partitions of $n$ into exactly $k$ parts which are relatively prime to $m$ with smallest
  part $l$, and let
  $p_{\Psi}(n,k,l)$ be the number of relatively prime partitions of $n$ into exactly $k$ parts the smallest
  of which is $l$.
  Let $p(n,k,\geq l)$ be the number of partitions of $n$ into exactly $k$ parts none of which is smaller than $l$, let
  $p_{\Psi(m)}(n,k, \geq l)$ be the number of partitions of $n$ into exactly $k$ parts which are relatively prime to $m$ and with
  no part smaller than $l$, and let
  $p_{\Psi}(n,k, \geq l)$ be the number of relatively prime partitions of $n$ into exactly $k$ parts none of which is smaller than $l$.
  \end{definition}
  \noindent
  We now list few identities relating some of these functions. Easy proofs are omitted.
  \begin{theorem} \label{identities-list}
  We have
  \begin{enumerate}
  \item $p_{\Psi(n)}(n)= p_{\Psi}(n)$, $p_{\Psi(n)}(n,k)= p_{\Psi}(n,k)$, and $p_{\Psi(n)}(n,k,l) = p_{\Psi}(n,k,l)$.
  \item $p(n) = \sum_{d|n} p_{\Psi} (d)$ or equivalently $p_{\Psi}(n)=\sum_{d|n}\mu(d) p(n/d)$.
  \item $p(n,k) = \sum_{d|n} p_{\Psi} (d,k)$ or equivalently $p_{\Psi}(n,k)=\sum_{d|n}\mu(d) p(n/d,k)$.
  \item $p(n,k,l) = \sum_{d|n} p_{\Psi}(n/d,k,l/d)$ or equivalently $p_{\Psi}(n,k,l)=\sum_{d|n}\mu(d) p(n/d,k,l/d)$.
  \item $p(n) = \sum_{k=1}^n p(n,k)$ and $p_{\Psi(m)}(n) = \sum_{k=1}^n p_{\Psi(m)}(n,k)$.
  \item $p(n,k) = \sum_{l=1}^{\lfloor n/k \rfloor} p(n,k,l)$ and
        $p_{\Psi(m)}(n,k) = \sum_{l=1}^{\lfloor n/k \rfloor} p_{\Psi(m)}(n,k,l)$.
  \item If $k> 1$, then $p_{\Psi(m)}(n,k,l) = p_{\Psi(m,l)}(n-l,k-1, \geq l)$.
  \item If $l\leq \lfloor n/k \rfloor$, then $p(n,k,\geq l) = \sum_{j=l}^{\lfloor n/k \rfloor} p(n,k,j)$
        and $p_{\Psi(m)}(n,k,\geq l) = \sum_{j=l}^{\lfloor n/k \rfloor} p_{\Psi(m)}(n,k,j)$.
  \end{enumerate}
  \end{theorem}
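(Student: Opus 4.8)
The plan is to derive all eight identities from two elementary principles: first, that the greatest common divisor $g$ of the parts of any partition of $n$ necessarily divides $n$ (being a common divisor of the parts, it divides their sum); and second, that one may stratify the set of partitions counted by a given function according to a natural statistic — the gcd of the parts, the number of parts, or the smallest part — and simply sum over the strata.

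For the first group of identities (1)--(4) the engine is a scaling bijection. Given a partition $\lambda$ of $n$ whose parts have gcd equal to $g$, dividing every part by $g$ produces a \emph{relatively prime} partition of $n/g$; this is a bijection between partitions of $n$ with part-gcd $g$ and relatively prime partitions of $n/g$, and it preserves the number of parts and (after the same division) the smallest part. For identity (1), I would note that since $g \mid n$ we have $\gcd(\lambda, n) = \gcd(g, n) = g$, so a partition of $n$ is relatively prime to $n$ exactly when $g = 1$, that is, exactly when it is relatively prime; the same remark applies verbatim with the number of parts or the smallest part held fixed, giving all three equalities. For (2)--(4) I would split the count by the value of $g$ (a divisor of $n$), apply the scaling bijection to each stratum, and reindex $d = n/g$. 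In (4) the fixed smallest part $l$ forces $g \mid l$ as well, so terms of $\sum_{d \mid n}$ with $d \nmid l$ must be read as zero (no partition has a non-integer smallest part); this is the one piece of bookkeeping. The ``equivalently'' forms then follow immediately by M\"obius inversion over the divisor lattice of $n$.

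Identities (5), (6) and (8) are pure stratification arguments. Every partition of $n$ has between $1$ and $n$ parts, giving (5); a partition of $n$ into $k$ parts has smallest part between $1$ and $\lfloor n/k \rfloor$, since $k$ copies of the smallest part cannot exceed $n$, giving (6) and, by restricting the lower end of the range, (8). In each case the relatively-prime-to-$m$ version holds for the identical reason, since the condition $\gcd(\text{parts}, m) = 1$ is unaffected by sorting the partitions according to these statistics.

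Finally, identity (7) is a removal bijection: deleting the smallest part $l$ from a partition of $n$ into $k \geq 2$ parts with smallest part $l$ yields a partition of $n - l$ into $k-1$ parts, each $\geq l$, and prepending $l$ inverts this. The only thing to verify is that the coprimality conditions match: $\gcd(\{l, a_1, \ldots, a_{k-1}\}, m) = 1$ is literally the same statement as $\gcd(\{a_1, \ldots, a_{k-1}\}, m, l) = 1$, which is precisely the condition defining $p_{\Psi(m,l)}$ on the reduced partition. I expect the main (and only minor) obstacle throughout to be conventions rather than mathematics: fixing that the part-gcd divides $n$, reading non-integer or out-of-range arguments as zero in (4), and interpreting the subscript $\Psi(m,l)$ in (7) as coprimality to the set $\{m,l\}$, so that the fixed smallest part is absorbed into the modulus.
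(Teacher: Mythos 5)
Your proposal is correct: the scaling bijection for (1)--(4), the stratification by number of parts or smallest part for (5), (6), (8), and the part-removal bijection for (7) (with $\gcd(\{l,a_1,\dots,a_{k-1}\},m)=\gcd(\{a_1,\dots,a_{k-1}\},m,l)$) all check out, including the bookkeeping that the part-gcd divides $n$ and that terms with $d\nmid l$ vanish in (4). The paper explicitly omits these proofs as easy, and your arguments are exactly the standard ones the author intends, so there is nothing to contrast.
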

 Note that the equivalence of the two identities in Theorem \ref{identities-list} (4) follows by the M\"obius inversion
 formula for
 arithmetical functions of several variables, see \cite[Theorem 2]{ElBachraoui1}.
 Further it is understood that
  \[
  p(n,k)=p_{\Psi(m)}(n,k) = 0,\ \text{if\ } k > n
  \]
  and
  \[
  p(n,k,l)=p_{\Psi(m)}(n,k,l) = p(n,k,\geq l)= p_{\Psi(m)}(n,k,\geq l) =0,\ \text{if $k > n$ or $l > \lfloor n/k \rfloor$.}
  \]
\noindent
  The following result is crucial to our formulas.
  \begin{theorem}[\cite{Ayad-Kihel1, ElBachraoui-Salim}] \label{phi-interval}
  Let $a$ and $b$ be positive integers such that $a \leq b$ and let
  \[
  \Phi([a,b],n) = \# \{c \in \{a,a+1,\ldots,b\}:\ \gcd(c,n)=1\}.
  \]
  Then
  \[
  \Phi([a,b],n) = \sum_{d|n}\mu(d)(\lfloor b/d\rfloor - \lfloor (a-1)/d\rfloor ).
  \]
  \end{theorem}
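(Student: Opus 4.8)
The plan is to count the integers in $[a,b]$ coprime to $n$ by encoding the coprimality condition with the M\"obius function and then interchanging the order of summation. First I would invoke the standard identity
\[
\sum_{d \mid \gcd(c,n)} \mu(d) =
\begin{cases} 1 & \text{if } \gcd(c,n)=1, \\ 0 & \text{otherwise,}\end{cases}
\]
which expresses the indicator of coprimality of $c$ and $n$. Summing this over $c$ from $a$ to $b$ rewrites the quantity of interest as the double sum
\[
\Phi([a,b],n) = \sum_{c=a}^{b} \ \sum_{d \mid \gcd(c,n)} \mu(d).
\]

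Next I would observe that $d \mid \gcd(c,n)$ holds precisely when $d \mid n$ and $d \mid c$ simultaneously. This allows the summation to be reorganized by fixing a divisor $d$ of $n$ on the outside and letting $c$ range over the multiples of $d$ in the interval:
\[
\Phi([a,b],n) = \sum_{d \mid n} \mu(d) \sum_{\substack{a \le c \le b \\ d \mid c}} 1.
\]
The key step is then to evaluate the inner sum, which simply counts the multiples of $d$ lying in $[a,b]$. Since the number of multiples of $d$ not exceeding an integer $x$ equals $\lfloor x/d \rfloor$, this count is $\lfloor b/d \rfloor - \lfloor (a-1)/d \rfloor$. Substituting this into the previous display yields the claimed formula.

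I do not anticipate a genuine obstacle, as the argument is a routine inclusion-exclusion. The only points requiring mild care are the justification that the finite double summation may be freely reindexed, and the off-by-one in the floor count: one must use $a-1$ rather than $a$ in the subtracted term so that a multiple of $d$ equal to $a$ is correctly included. Everything else is formal manipulation of finite sums.
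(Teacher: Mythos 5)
Your proof is correct: the Möbius identity $\sum_{d\mid\gcd(c,n)}\mu(d)=[\gcd(c,n)=1]$, the interchange of the two finite sums, and the count $\lfloor b/d\rfloor-\lfloor (a-1)/d\rfloor$ of multiples of $d$ in $[a,b]$ together give exactly the stated formula. The paper itself states this theorem with a citation and gives no proof, but your argument is the standard one found in the cited sources, so there is nothing further to add.
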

  Note that this result generalizes the Euler phi function since 
  \[\Phi(n)=\#\{c\in [1,n]:\ \gcd(c,n)=1\} =\Phi([1,n],n). \]
 \section{Formulas for $p_{\Psi(m)}(n,k,l)$ and $p_{\Psi}(n,k,l)$}
 \begin{theorem} \label{base-case}
 If $n\geq 2$ and $l\leq \lfloor n/2 \rfloor$, then
 \[
 p_{\Psi(m)}(n,2,\geq l) = \sum_{d|(n,m)}\mu(d)\left(\left\lfloor \frac{n}{2d}\right\rfloor - \left\lfloor\frac{l-1}{d}\right\rfloor\right).
 \]
 \end{theorem}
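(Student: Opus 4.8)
The plan is to set up a bijection between the partitions being counted and a set of integers in an interval, then invoke Theorem~\ref{phi-interval}. A partition of $n$ into exactly two parts, both at least $l$, is the same as an unordered pair $\{a,b\}$ with $a \geq b \geq l$ and $a + b = n$; such a partition is determined by its smaller part $b$. The constraint $a \geq b$ together with $a = n - b$ forces $b \leq n/2$, hence $b \leq \lfloor n/2 \rfloor$, while $b \geq l$ is the lower bound. So the map sending each such partition to its smaller part $b$ is a bijection onto the integers $b$ with $l \leq b \leq \lfloor n/2 \rfloor$ (this includes the degenerate equal-parts case $b = n/2$ when $n$ is even, which is a single partition counted once).

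Next I would translate the relative primality condition into a condition on $b$ alone. The parts of the partition form the set $\{n - b, b\}$, and being relatively prime to $m$ means $\gcd(n - b, b, m) = 1$. Using $\gcd(n - b, b) = \gcd(n, b)$, this simplifies to $\gcd(n, b, m) = 1$, equivalently $\gcd\bigl(b, (n,m)\bigr) = 1$. Therefore $p_{\Psi(m)}(n,2,\geq l)$ equals the number of integers $b$ in $[l, \lfloor n/2 \rfloor]$ that are coprime to $(n,m)$, which in the notation of Theorem~\ref{phi-interval} is exactly $\Phi\bigl([l, \lfloor n/2 \rfloor], (n,m)\bigr)$.

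Finally, applying Theorem~\ref{phi-interval} with the interval $[l, \lfloor n/2 \rfloor]$ and modulus $(n,m)$ yields
\[
p_{\Psi(m)}(n,2,\geq l) = \sum_{d|(n,m)} \mu(d)\left( \left\lfloor \frac{\lfloor n/2 \rfloor}{d} \right\rfloor - \left\lfloor \frac{l-1}{d} \right\rfloor \right).
\]
To reach the stated form I would invoke the elementary identity $\lfloor \lfloor x \rfloor / d \rfloor = \lfloor x/d \rfloor$ valid for a positive integer $d$, applied with $x = n/2$, which replaces $\lfloor \lfloor n/2 \rfloor / d \rfloor$ by $\lfloor n/(2d) \rfloor$. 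I expect the only genuinely delicate point to be the gcd reduction in the second step — verifying that coprimality of the pair of parts to $m$ collapses to a single coprimality condition on the smaller part relative to $(n,m)$ — together with a careful check that the equal-parts boundary case is neither double-counted nor omitted; everything else is a direct substitution into Theorem~\ref{phi-interval}.
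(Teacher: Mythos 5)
Your proposal is correct and follows essentially the same route as the paper: parametrize the two-part partitions by the smaller part ranging over $[l,\lfloor n/2\rfloor]$, reduce $\gcd(a,n-a,m)=1$ to coprimality with $(n,m)$, and apply Theorem~\ref{phi-interval}. You are in fact slightly more careful than the paper, which silently uses the identity $\lfloor \lfloor n/2\rfloor/d\rfloor=\lfloor n/(2d)\rfloor$ that you justify explicitly.
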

 \begin{proof}
 We have
 \[
 \begin{split}
 p_{\Psi(m)}(n,2,\geq l)
 &=
 \# \{ a\in [l, \lfloor n/2 \rfloor ]:\ \gcd(a,n-a,m)=1 \} \\
 &=
 \# \{ a\in [l, \lfloor n/2 \rfloor ]:\ \gcd(a,(n,m))=1 \} \\
 &=
 \Phi([l, \lfloor n/2 \rfloor ], \gcd(n,m) ) \\
 &=
 \sum_{d|(n,m)} \mu(d)\left( \left\lfloor \frac{n}{2d} \right\rfloor - \left\lfloor \frac{l-1}{d} \right\rfloor \right),
 \end{split}
 \]
 where the last identity follows by Theorem \ref{phi-interval}.
 \end{proof}
 %
 %
 %
 \begin{theorem} \label{main1}
 We have
 \[
 \begin{split}
 (a)\quad p_{\Psi(m)}(n,k,i_0) &=
 \sum_{i_1 =i_0}^{\lfloor\frac{n- i_0}{k-1} \rfloor}
 \sum_{i_2 =i_1}^{\lfloor\frac{n- i_0 - i_1}{k-2} \rfloor} \ldots
 \sum_{i_{k-3}=i_{k-4}}^{\lfloor\frac{n- i_0 - i_1-i_2- \ldots-i_{k-4}}{3} \rfloor}
  \sum_{d|(n,m,i_0,i_1,i_2,\ldots,i_{k-3})} \\
 & \mu(d) \left(\left\lfloor \frac{n-i_0-i_1-i_2- \ldots i_{k-3}}{2d} \right\rfloor -
 \left\lfloor\frac{i_{k-3}-1}{d} \right\rfloor \right). \\
 (b)\quad p_{\Psi}(n,k,i_0) &=
 \sum_{i_1 =i_0}^{\lfloor\frac{n- i_0}{k-1} \rfloor}
 \sum_{i_2 =i_1}^{\lfloor\frac{n- i_0 - i_1}{k-2} \rfloor} \ldots
 \sum_{i_{k-3}=i_{k-4}}^{\lfloor\frac{n- i_0 - i_1-i_2- \ldots-i_{k-4}}{3} \rfloor}
  \sum_{d|(n,i_0,i_1,i_2,\ldots,i_{k-3})} \\
 & \mu(d) \left(\left\lfloor \frac{n-i_0-i_1-i_2- \ldots i_{k-3}}{2d} \right\rfloor -
 \left\lfloor\frac{i_{k-3}-1}{d} \right\rfloor \right).
 \end{split}
 \]
 \end{theorem}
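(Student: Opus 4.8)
The plan is to prove part (a) by induction on the number of parts $k$ (equivalently, by iterating the structural identities of Theorem \ref{identities-list}), and then to read off part (b) as an immediate specialization. The two engines of the induction are Theorem \ref{identities-list}(7), which strips off the smallest part at the cost of adjoining it to the modulus, and Theorem \ref{identities-list}(8), which re-expands a ``$\geq l$'' count as a sum over the value of the new smallest part. Applied in tandem they trade one part for one summation index, and the process terminates at partitions into two parts, where Theorem \ref{base-case} supplies the closed M\"obius sum.

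For the base case $k=3$ the product of summations $\sum_{i_1}\cdots\sum_{i_{k-3}}$ is empty (its index range $i_1,\ldots,i_0$ is vacuous), so the asserted identity is just $p_{\Psi(m)}(n,3,i_0)=\sum_{d\mid(n,m,i_0)}\mu(d)\bigl(\lfloor (n-i_0)/(2d)\rfloor-\lfloor (i_0-1)/d\rfloor\bigr)$. I would get this from one application of Theorem \ref{identities-list}(7), namely $p_{\Psi(m)}(n,3,i_0)=p_{\Psi(m,i_0)}(n-i_0,2,\geq i_0)$, followed by Theorem \ref{base-case} with modulus $\gcd(m,i_0)$; the only point to check is the collapse $\gcd(n-i_0,m,i_0)=\gcd(n,m,i_0)$, which holds because once $i_0$ sits inside the gcd the shift of $n$ by $i_0$ is invisible. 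For the inductive step with $k\geq 4$ I would write $p_{\Psi(m)}(n,k,i_0)=p_{\Psi(m,i_0)}(n-i_0,k-1,\geq i_0)$ by (7), expand this by (8) as $\sum_{i_1=i_0}^{\lfloor (n-i_0)/(k-1)\rfloor}p_{\Psi(m,i_0)}(n-i_0,k-1,i_1)$, and apply the induction hypothesis to each summand under the substitution $(n,m,i_0)\mapsto(n-i_0,(m,i_0),i_1)$ with $k\mapsto k-1$. Splicing the outer sum over $i_1$ onto the $(k-4)$-fold array produced by the hypothesis yields the full $(k-3)$-fold array of the statement.

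The step I expect to be the main obstacle is the index-and-gcd bookkeeping showing that the hypothesis reproduces the target formula verbatim. Three alignments must be verified term by term: first, the summation limits, where the induction hypothesis contributes an $i_{p+1}$-sum with denominator $(k-1)-p=k-(p+1)$ and numerator $(n-i_0)-i_1-\cdots-i_p=n-i_0-\cdots-i_p$, exactly the limit written in the statement; second, the final floor argument, where $(n-i_0)-i_1-\cdots-i_{k-3}=n-i_0-\cdots-i_{k-3}$; and third, and most delicately, the range of the M\"obius sum, where I must repeatedly invoke $\gcd\bigl((n-i_0)-i_1-\cdots,\,m,i_0,i_1,\ldots\bigr)=\gcd(n,m,i_0,i_1,\ldots)$ so that the divisor condition $d\mid(n-i_0,(m,i_0),i_1,\ldots,i_{k-3})$ becomes $d\mid(n,m,i_0,i_1,\ldots,i_{k-3})$. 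Once these three identifications are made the inductive step closes. Finally, part (b) follows from part (a) by Theorem \ref{identities-list}(1): since $p_{\Psi}(n,k,i_0)=p_{\Psi(n)}(n,k,i_0)$, I specialize (a) to $m=n$, and then $\gcd(n,n,i_0,\ldots,i_{k-3})=\gcd(n,i_0,\ldots,i_{k-3})$ converts the formula of (a) into that of (b).
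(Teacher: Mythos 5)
Your proposal is correct and follows essentially the same route as the paper: the paper's proof is exactly the repeated application of Theorem \ref{identities-list}(7) and (8) terminating in Theorem \ref{base-case}, which you have merely packaged as a formal induction on $k$. Your explicit attention to the gcd collapse $\gcd(n-i_0-\cdots,\,m,i_0,\ldots)=\gcd(n,m,i_0,\ldots)$ and to the alignment of the summation limits makes rigorous the bookkeeping the paper leaves implicit in its chain of equalities, and part (b) is obtained in both cases by the specialization $m=n$ via Theorem \ref{identities-list}(1).
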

 \begin{proof}
 (a) Repeatedly application of Theorem \ref{identities-list} (7, 8) yields
 \[
 \begin{split}
 p_{\Psi(m)} (n,k,i_0) &= p_{\Psi(m,i_0)}(n- i_0,k-1,\geq i_0) \\
 &= \sum_{i_1 =i_0}^{\lfloor \frac{n-i_0}{k-1}\rfloor} p_{\Psi(m,i_0)}(n-i_0,k-1,i_1) \\
 &= \sum_{i_1 =i_0}^{\lfloor \frac{n-i_0}{k-1}\rfloor}
 \sum_{i_2=i_1}^{\lfloor\frac{n-i_0-i_1}{k-2}\rfloor} p_{\Psi(m,i_0,i_1)}(n-i_0-i_1,k-2, i_2) \\
 &= \sum_{i_1 =i_0}^{\lfloor \frac{n-i_0}{k-1}\rfloor}
  \sum_{i_2=i_1}^{\lfloor\frac{n-i_0-i_1}{k-2}\rfloor} \ldots
  \sum_{i_{k-3}=i_{k-4}}^{\lfloor\frac{n-i_0-i_1-\ldots-i_{k-4}}{3}\rfloor} \\
 & p_{\Psi(m,i_0,i_1,\ldots,i_{k-4})}(n-i_0-i_1-\ldots-i_{k-4},3,i_{k-3}) \\
 &= \sum_{i_1 =i_0}^{\lfloor \frac{n-i_0}{k-1}\rfloor}
  \sum_{i_2=i_1}^{\lfloor\frac{n-i_0-i_1}{k-2}\rfloor} \ldots
  \sum_{i_{k-3}=i_{k-4}}^{\lfloor\frac{n-i_0-i_1-\ldots-i_{k-4}}{3}\rfloor} \\
 & p_{\Psi(m,i_0,i_1,\ldots,i_{k-4},i_{k-3})}(n-i_0-i_1-\ldots-i_{k-4}-i_{k-3},2,\geq i_{k-3}) \\
 &= \sum_{i_1 =i_0}^{\lfloor \frac{n-i_0}{k-1}\rfloor}
  \sum_{i_2=i_1}^{\lfloor\frac{n-i_0-i_1}{k-2}\rfloor} \ldots
  \sum_{i_{k-3}=i_{k-4}}^{\lfloor\frac{n-i_0-i_1-\ldots-i_{k-4}}{3}\rfloor}
  \sum_{d|(n,m,i_0,i_1,i_2,\ldots,i_{k-3})} \\
 & \mu(d) \left(\left\lfloor \frac{n-i_0-i_1-i_2- \ldots i_{k-3}}{2d} \right\rfloor -
 \left\lfloor\frac{i_{k-3}-1}{d} \right\rfloor \right),
 \end{split}
 \]
 where the last identity follows by Theorem \ref{base-case}.

 \noindent
 (b) This part follows directly from part (b) since $p_{\Psi} (n,k,i_0) = p_{\Psi(n)} (n,k,i_0)$ by
  Theorem \ref{identities-list} (1).
 \end{proof}
 \section{Formulas for $p_{\Psi(m)}(n,k)$, $p_{\Psi}(n,k)$, and $p(n,k)$}
 \begin{theorem} \label{main2}
 We have
 \[
 \begin{split}
 (a)\quad p_{\Psi(m)}(n,k)&=
 \sum_{i_0 =1}^{\lfloor n/k \rfloor}
 \sum_{i_1 =i_0}^{\lfloor\frac{n- i_0}{k-1} \rfloor}
 \sum_{i_2 =i_1}^{\lfloor\frac{n- i_0 - i_1}{k-2} \rfloor} \ldots
 \sum_{i_{k-3}=i_{k-4}}^{\lfloor\frac{n- i_0 - i_1-i_2- \ldots-i_{k-4}}{3} \rfloor}
  \sum_{d|(n,m,i_0,i_1,i_2,\ldots,i_{k-3})} \\
 & \mu(d) \left(\left\lfloor \frac{n-i_0-i_1-i_2- \ldots i_{k-3}}{2d} \right\rfloor -
 \left\lfloor\frac{i_{k-3}-1}{d} \right\rfloor \right).
 \end{split}
 \]
 \[
 \begin{split}
 (b)\quad  p_{\Psi}(n,k)&=
 \sum_{i_0 =1}^{\lfloor n/k \rfloor}
 \sum_{i_1 =i_0}^{\lfloor\frac{n- i_0}{k-1} \rfloor}
 \sum_{i_2 =i_1}^{\lfloor\frac{n- i_0 - i_1}{k-2} \rfloor} \ldots
 \sum_{i_{k-3}=i_{k-4}}^{\lfloor\frac{n- i_0 - i_1-i_2- \ldots-i_{k-4}}{3} \rfloor}
  \sum_{d|(n,i_0,i_1,i_2,\ldots,i_{k-3})} \\
 & \mu(d) \left(\left\lfloor \frac{n-i_0-i_1-i_2- \ldots i_{k-3}}{2d} \right\rfloor -
 \left\lfloor\frac{i_{k-3}-1}{d} \right\rfloor \right).
 \end{split}
 \]
 \end{theorem}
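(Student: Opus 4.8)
The plan is to reduce Theorem \ref{main2} to Theorem \ref{main1} by summing over all admissible values of the smallest part. First I would invoke Theorem \ref{identities-list} (6), which expresses the number of partitions into exactly $k$ parts as the sum over the possible values $l$ of the smallest part,
\[
p_{\Psi(m)}(n,k) = \sum_{l=1}^{\lfloor n/k \rfloor} p_{\Psi(m)}(n,k,l),
\]
together with the analogous identity for $p_{\Psi}(n,k)$. The range $1 \le l \le \lfloor n/k \rfloor$ is exactly the set of admissible smallest parts, since having $k$ parts each at least $l$ forces $kl \le n$.

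Next I would substitute into each summand the closed form for $p_{\Psi(m)}(n,k,l)$ furnished by Theorem \ref{main1}(a), reading the smallest part $l$ as the index $i_0$. This produces precisely the nested sum on the right-hand side of Theorem \ref{main2}(a): the innermost chain of summations together with the M\"obius sum is copied verbatim from Theorem \ref{main1}, while the extra outermost summation $\sum_{i_0 =1}^{\lfloor n/k\rfloor}$ is exactly the sum over the smallest part. Part (b) follows in the same way, using Theorem \ref{main1}(b) in place of (a).

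Since the argument amounts to a direct substitution, I do not expect a substantial obstacle. The only points requiring care are at the boundary: one should check that the convention $p_{\Psi(m)}(n,k,l)=0$ for $l>\lfloor n/k\rfloor$ legitimizes truncating the outer sum at $\lfloor n/k \rfloor$, and that the degenerate small-$k$ cases (where the chain of nested sums collapses, say $k=2$) reduce correctly to Theorem \ref{base-case}.
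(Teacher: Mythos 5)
Your proposal is correct and follows essentially the same route as the paper: the paper's proof of part (a) is precisely the combination of Theorem \ref{identities-list} (6) with Theorem \ref{main1}, and your derivation of part (b) via Theorem \ref{main1}(b) is an immaterial variant of the paper's appeal to Theorem \ref{identities-list} (1). Your added remarks on the truncation of the outer sum and the degenerate small-$k$ cases are sensible boundary checks but do not change the argument.
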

 \begin{proof}
 Part (a) follows by Theorem \ref{identities-list} (6) and Theorem \ref{main1}.
 Part (b) follows by part (a) and Theorem \ref{identities-list} (1).
 \end{proof}
 \begin{theorem} \label{main3}
 We have
 \[
  \begin{split}
 p(n,k) &=
 \sum_{d|n}
 \sum_{i_0 =1}^{\lfloor d/k \rfloor}
 \sum_{i_1 =i_0}^{\lfloor\frac{d- i_0}{k-1} \rfloor}
 \sum_{i_2 =i_1}^{\lfloor\frac{d- i_0 - i_1}{k-2} \rfloor} \ldots
 \sum_{i_{k-3}=i_{k-4}}^{\lfloor\frac{d- i_0 - i_1-i_2- \ldots-i_{k-4}}{3} \rfloor}
  \sum_{c|(d,i_0,i_1,i_2,\ldots,i_{k-3})} \\
 & \mu(c) \left(\left\lfloor \frac{d-i_0-i_1-i_2- \ldots i_{k-3}}{2c} \right\rfloor -
 \left\lfloor\frac{i_{k-3}-1}{c} \right\rfloor \right).
 \end{split}
 \]
 \end{theorem}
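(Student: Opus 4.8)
The plan is to derive this formula as an immediate consequence of the two principal results already in hand, namely the divisor-sum identity of Theorem~\ref{identities-list}~(3) together with the explicit nested-sum formula of Theorem~\ref{main2}~(b). First I would invoke Theorem~\ref{identities-list}~(3), which expresses the unrestricted count $p(n,k)$ as a sum over divisors of the relatively prime counts,
\[
p(n,k) = \sum_{d \mid n} p_{\Psi}(d,k).
\]

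Next I would substitute into this the closed form for $p_{\Psi}(d,k)$ furnished by Theorem~\ref{main2}~(b). That theorem gives $p_{\Psi}(n,k)$ as the large nested sum over the indices $i_0, i_1, \ldots, i_{k-3}$ followed by a M\"obius sum over the divisors of $(n, i_0, \ldots, i_{k-3})$; applying it with $n$ replaced throughout by $d$ yields precisely the inner expression appearing in the statement, with $d$ now playing the role previously played by $n$. Performing this substitution term by term produces the desired two-layered formula: the outer $\sum_{d \mid n}$ supplied by Theorem~\ref{identities-list}~(3), and everything nested inside it supplied by Theorem~\ref{main2}~(b) evaluated at $d$.

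The only point requiring attention---and the sole ``obstacle,'' such as it is---is a clash of notation. In Theorem~\ref{main2}~(b) the innermost M\"obius sum runs over a divisor variable named $d$, whereas the present statement already uses $d$ for the outer sum over divisors of $n$. Accordingly, when substituting I would rename the inner divisor variable to $c$, so that the M\"obius sum reads $\sum_{c \mid (d, i_0, \ldots, i_{k-3})} \mu(c)(\cdots)$ exactly as displayed. After this relabelling the two formulas splice together verbatim, and the proof is complete; no new combinatorial or analytic ingredient is needed beyond the two cited results.
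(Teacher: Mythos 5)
Your proof is correct and follows exactly the route the paper takes: apply Theorem~\ref{identities-list}~(3) to write $p(n,k)=\sum_{d\mid n}p_{\Psi}(d,k)$ and substitute Theorem~\ref{main2}~(b) evaluated at $d$, with the inner divisor variable relabelled to $c$. The paper's own proof is a one-line citation of these same two results, so your write-up simply supplies the details it leaves implicit.
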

 \begin{proof}
 The is a consequence of Theorem \ref{identities-list} (3) and Theorem \ref{main2}.
 \end{proof}
 \section{Formulas for $p_{\Psi(m)}(n)$, $p_{\Psi}(n)$, and $p(n)$}
 \begin{theorem} \label{main4}
 We have
 \[
 \begin{split}
 (a)\quad p_{\Psi(m)}(n)&=
 \sum_{k=1}^{n}
 \sum_{i_0 =1}^{\lfloor n/k \rfloor}
 \sum_{i_1 =i_0}^{\lfloor\frac{n- i_0}{k-1} \rfloor}
 \sum_{i_2 =i_1}^{\lfloor\frac{n- i_0 - i_1}{k-2} \rfloor} \ldots
 \sum_{i_{k-3}=i_{k-4}}^{\lfloor\frac{n- i_0 - i_1-i_2- \ldots-i_{k-4}}{3} \rfloor}
  \sum_{d|(n,m,i_0,i_1,i_2,\ldots,i_{k-3})} \\
 & \mu(d) \left(\left\lfloor \frac{n-i_0-i_1-i_2- \ldots i_{k-3}}{2d} \right\rfloor -
 \left\lfloor\frac{i_{k-3}-1}{d} \right\rfloor \right).
 \end{split}
 \]
 \[
 \begin{split}
 (b)\quad p_{\Psi}(n)&=
 \sum_{k=1}^{n}
 \sum_{i_0 =1}^{\lfloor n/k \rfloor}
 \sum_{i_1 =i_0}^{\lfloor\frac{n- i_0}{k-1} \rfloor}
 \sum_{i_2 =i_1}^{\lfloor\frac{n- i_0 - i_1}{k-2} \rfloor} \ldots
 \sum_{i_{k-3}=i_{k-4}}^{\lfloor\frac{n- i_0 - i_1-i_2- \ldots-i_{k-4}}{3} \rfloor}
  \sum_{d|(n,i_0,i_1,i_2,\ldots,i_{k-3})} \\
 & \mu(d) \left(\left\lfloor \frac{n-i_0-i_1-i_2- \ldots i_{k-3}}{2d} \right\rfloor -
 \left\lfloor\frac{i_{k-3}-1}{d} \right\rfloor \right).
 \end{split}
 \]
 \end{theorem}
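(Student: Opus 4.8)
The plan is to obtain both formulas by summing the already-proved per-part-count formulas of Theorem \ref{main2} over the number of parts. Since every partition of $n$ has exactly $k$ parts for a unique $k$ with $1 \le k \le n$, the total count decomposes as a sum over $k$; this is precisely the content of Theorem \ref{identities-list} (5) for the functions $p$ and $p_{\Psi(m)}$.

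For part (a), I would invoke Theorem \ref{identities-list} (5), which gives
\[
p_{\Psi(m)}(n) = \sum_{k=1}^{n} p_{\Psi(m)}(n,k),
\]
and then substitute into each summand the closed expression from Theorem \ref{main2} (a). Prepending $\sum_{k=1}^{n}$ as the outermost summation reproduces the displayed formula verbatim, since the inner summation bounds in Theorem \ref{main2} (a) already carry the dependence on $k$ in exactly the form required; no reindexing of the nested sums is necessary.

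For part (b), I would follow the same route used to deduce Theorem \ref{main2} (b) from \ref{main2} (a): by Theorem \ref{identities-list} (1) we have $p_{\Psi}(n) = p_{\Psi(n)}(n)$, so it suffices to specialize part (a) to $m = n$. Under this specialization the divisibility condition $d \mid (n,m,i_0,i_1,\ldots,i_{k-3})$ collapses to $d \mid (n,i_0,i_1,\ldots,i_{k-3})$, because $\gcd(n,n) = n$ forces $\gcd(n,m,i_0,\ldots,i_{k-3}) = \gcd(n,i_0,\ldots,i_{k-3})$, while the remaining factor of each summand is independent of $m$. This yields exactly the formula claimed in part (b).

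The only point requiring genuine care—and hence the main obstacle—is the degenerate behaviour of the nested-sum notation for small $k$. The displayed expressions are written with an ellipsis whose innermost explicit layer descends to a denominator $3$ and bottoms out at the two-part base case of Theorem \ref{base-case}; for $k = 1$ and $k = 2$ this chain must be read correctly, with the intermediate $i_j$-sums absent and the formula reducing directly to a single application of the base case (or to a one-term count). I would verify these boundary values separately against the definitions of $p_{\Psi(m)}(n,1)$ and $p_{\Psi(m)}(n,2)$, after which the outer sum over $k$ simply assembles the complete formula.
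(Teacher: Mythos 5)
Your proof is correct and follows exactly the paper's own route: part (a) is Theorem \ref{identities-list} (5) combined with Theorem \ref{main2} (a), and part (b) is the specialization $m=n$ via Theorem \ref{identities-list} (1). Your extra remark about checking the degenerate cases $k=1,2$ of the nested-sum notation is a reasonable precaution that the paper leaves implicit.
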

 \begin{proof}
 Combine Theorem \ref{identities-list} (5) with Theorem \ref{main2} to obtain part (a). As to part (b)
 combine part (a) with Theorem \ref{identities-list} (1).
 \end{proof}
 \begin{theorem} \label{main5}
 We have
 \[
  \begin{split}
 p(n) &=
 \sum_{d|n}
 \sum_{k=1}^{d}
 \sum_{i_0 =1}^{\lfloor d/k \rfloor}
 \sum_{i_1 =i_0}^{\lfloor\frac{d- i_0}{k-1} \rfloor}
 \sum_{i_2 =i_1}^{\lfloor\frac{d- i_0 - i_1}{k-2} \rfloor} \ldots
 \sum_{i_{k-3}=i_{k-4}}^{\lfloor\frac{n- i_0 - i_1-i_2- \ldots-i_{k-4}}{3} \rfloor}
  \sum_{c|(d,i_0,i_1,i_2,\ldots,i_{k-3})} \\
 & \mu(c) \left(\left\lfloor \frac{d-i_0-i_1-i_2- \ldots i_{k-3}}{2c} \right\rfloor -
 \left\lfloor\frac{i_{k-3}-1}{c} \right\rfloor \right).
 \end{split}
 \]
 \end{theorem}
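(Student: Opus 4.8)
The plan is to mirror the derivation of Theorem \ref{main3} one level higher in the hierarchy, replacing the role of $p(n,k)$ by $p(n)$ and the role of Theorem \ref{main2} by Theorem \ref{main4}. Concretely, I would start from the divisor identity $p(n) = \sum_{d\mid n} p_{\Psi}(d)$ supplied by Theorem \ref{identities-list} (2), which rewrites the unrestricted partition count as a sum, over the divisors of $n$, of the relatively prime partition counts $p_{\Psi}(d)$.

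First I would substitute, for each divisor $d$ of $n$, the closed form for $p_{\Psi}(d)$ given by Theorem \ref{main4}(b), reading that theorem with its free variable $n$ specialized to the divisor $d$. Under this specialization the outer index $k$ ranges from $1$ to $d$, the nested floor bounds on $i_0,i_1,\ldots,i_{k-3}$ all have $n$ replaced by $d$, and the innermost M\"obius sum runs over the divisors of $(d,i_0,i_1,\ldots,i_{k-3})$ against the same bracketed difference of floors. The only piece of bookkeeping is to rename the internal M\"obius summation variable, which is written $d$ in Theorem \ref{main4}(b), to $c$, so that it does not collide with the outer divisor index $d\mid n$. After this renaming the resulting nested sums coincide verbatim with the asserted expression, which completes the argument.

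I do not anticipate any genuine obstacle: all the substantive content is already carried by Theorems \ref{identities-list} (2) and \ref{main4}, and the derivation is a direct substitution whose sole subtlety is the variable collision just noted. This is exactly parallel to the proof of Theorem \ref{main3}, which combined Theorem \ref{identities-list} (3) with Theorem \ref{main2} in precisely the same manner; one could alternatively reach the same formula by starting from $p(n)=\sum_{k=1}^{n}p(n,k)$ (Theorem \ref{identities-list} (5)), inserting Theorem \ref{main3}, interchanging the $k$- and $d$-summations, and discarding the vanishing terms with $k>d$, for which $\lfloor d/k\rfloor=0$ forces the $i_0$-sum to be empty.
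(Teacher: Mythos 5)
Your proof is correct and is exactly the paper's argument: the paper's proof of Theorem \ref{main5} is the one-line ``Use Theorem \ref{identities-list} (2) and Theorem \ref{main4}'', i.e., substitute Theorem \ref{main4}(b) with $n$ specialized to each divisor $d$ into $p(n)=\sum_{d\mid n}p_{\Psi}(d)$ and rename the inner M\"obius summation variable to $c$. (Your substitution also correctly yields $d$ in the upper limit of the $i_{k-3}$-sum, where the printed statement retains a stray $n$ --- an apparent typo in the paper.)
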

 \begin{proof}
 Use Theorem \ref{identities-list} (2) and Theorem \ref{main4}.
 \end{proof}

\end{document}